\title{On Numbers of Tuples of Nilpotent Matrices over Finite Fields under Simultaneous Conjugation}
\author{Jiuzhao Hua} 
\newtheorem{thm}{Theorem}[section]
\newtheorem{lem}{Lemma}[section]
\newtheorem{dfn}{Definition}[section]
\newtheorem{cjc}{Conjecture}[section]
\begin{document}
\date{\vspace{-0.5cm}}
\date{}  % Toggle commenting to test
\maketitle
\begin{abstract}
The problem of classifying tuples of nilpotent matrices over a field under simultaneous conjugation is considered ``hopeless". However, for any given matrix order over a finite field, the number of concerned orbits is always finite. This paper gives a closed formula for the number of absolutely indecomposable orbits using the same methodology as Hua \cite{JH 2000}; those orbits are non-splittable over field extensions. As a consequence, those numbers are always polynomials in the cardinality of the base field with integral coefficients. It is conjectured that those coefficients are always non-negative. 
\end{abstract}

% This line here is a comment.
\section{Introduction}
Let $q$ be a prime power and $\mathbb{F}_q$ be the finite field with $q$ elements. For any positive integer $n$, let $\mathcal{M}_n(\mathbb{F}_q)$ be the matrix algebra which consists of all $n\times n$ matrices over $\mathbb{F}_q$, $GL(n, \mathbb{F}_q)\subset\mathcal{M}_n(\mathbb{F}_q)$ be the General Linear Group consisting of all invertible ones and $\mathcal{N}_n(\mathbb{F}_q)\subset\mathcal{M}_n(\mathbb{F}_q) $ be the subset of all nilpotent ones.

Let $g$ be a fixed positive integer and $\mathcal{M}_n(\mathbb{F}_q)^g$ be the set of all $g$-tuples of $n\times n$ matrices over $\mathbb{F}_q$ and 
$\mathcal{N}_n(\mathbb{F}_q)^g$ be the set of all $g$-tuples of nilpotent ones, i.e.,
\begin{align*} 
\mathcal{M}_n(\mathbb{F}_q)^g &= \{(M_1, M_2, \dots, M_g) \,|\, M_i\in \mathcal{M}_n(\mathbb{F}_q), 1\le i \le g \}, \\
\mathcal{N}_n(\mathbb{F}_q)^g &= \{(M_1, M_2, \dots, M_g) \,|\, M_i\in \mathcal{N}_n(\mathbb{F}_q), 1\le i \le g \} .
\end{align*}
$GL(n, \mathbb{F}_q)$ acts on $\mathcal{M}_n(\mathbb{F}_q)^g$ by simultaneous conjugation, i.e.,
\begin{align*} 
GL(n, \mathbb{F}_q) \times \mathcal{M}_n(\mathbb{F}_q)^g \quad &\mapsto \quad \mathcal{M}_n( \mathbb{F}_q)^g \\
(T, (M_1, M_2, \dots, M_g) ) \quad &\mapsto \quad (T^{-1}M_1T, T^{-1}M_2T, \dots, T^{-1}M_gT).
\end{align*}
It is obvious that $\mathcal{N}_n(\mathbb{F}_q)^g$ is closed under the action of $GL(n, \mathbb{F}_q)$. Every $g$-tuple of matrices 
$(M_1, M_2, \dots, M_g)\in\mathcal{M}_n(\mathbb{F}_q)^g$ gives rise to a 
representation of the free algebra $\mathbb{F}_q\langle x_1, x_2, \dots, x_g\rangle$ by the following mapping:
\begin{align*} 
\mathbb{F}_q\langle x_1, x_2, \dots, x_g\rangle \quad &\mapsto \quad \mathcal{M}_n( \mathbb{F}_q) \\
x_i \quad &\mapsto \quad M_i \,\, (i=1,2,\cdots, g).
\end{align*}
Conversely, any finite dimensional representation of $\mathbb{F}_q\langle x_1, x_2, \dots, x_g\rangle$ is determined by a $g$-tuple from $\mathcal{M}_n(\mathbb{F}_q)^g$.
It is obvious that two $g$-tuples are in the same orbit if and only if their corresponding representations are isomorphic.

\begin{dfn}
An orbit of $\mathcal{M}_n(\mathbb{F}_q)^g/GL(n, \mathbb{F}_q)$ is said to be \textit{indecomposable} if its corresponding representation of the free algebra $\mathbb{F}_q\langle x_1, x_2, \dots, x_g\rangle$ is indecomposable; it is \textit{absolutely indecomposable} if its corresponding representation is absolutely indecomposable. 
\end{dfn}
Thus, an orbit $GL(n, \mathbb{F}_q) \cdot (M_1, M_2, \dots, M_g)$ is absolutely indecomposable if there does not exist an invertible matrix $T$ over $\overline{\mathbb{F}}_q$, the algebraic closure of $\mathbb{F}_q$, such that 
$$ (T^{-1}M_1T, T^{-1}M_2T, \dots, T^{-1}M_gT) = \left( \begin{bmatrix} A_1\!\!&\!\!0 \\ 0\!\!&\!\!B_1\end{bmatrix}, \begin{bmatrix} A_2\!\!&\!\!0 \\ 0\!\!&\!\!B_2\end{bmatrix},
\cdots, \begin{bmatrix} A_g\!\!&\!\!0 \\ 0\!\! &\!\!B_g\end{bmatrix} \right),$$
where $A_i, B_i$ for $1\le i\le g$ are square matrices over $\overline{\mathbb{F}}_q$.

Let $M_g(n,q)$ ($I_g(n,q)$, $A_g(n,q)$) be the number of orbits (indecomposable orbits, absolutely indecomposable orbits respectively) of $g$-tuples of nilpotent $n\times n$ matrices over $\mathbb{F}_q$ under simultaneous conjugation. It will be shown in later sections that $M_g(n,q), I_g(n,q)$ and $A_g(n,q)$ are all polynomials in $q$ with rational coefficients. The case for general $g$-tuples has been studied in Hua \cite{JH 2000}. It is widely known that $A_g(n,q)$'s are of significant importance because of their deep connections with Geometric Invariant Theory, Quantum Group Theory and Representation Theory of Kac-Moody Algebras (Kac \cite{VK 1983}, Ringel \cite{CR 1990} and Hausel \cite{TH 2010}). 

\section{Minimal building blocks of conjugacy classes of $GL(n,\mathbb{F}_q)$}

Let $\mathbb{N}$ be the set of all positive integers, $\mathcal{P}$ be the set of partitions of all positive integers, i.e.,
$$\mathcal{P} = \{(\lambda_1, \lambda_2, \dots, \lambda_k)\,|\,k\in\mathbb{N}, \lambda_i\in\mathbb{N}, \lambda_i\ge\lambda_{i+1}\ge 1, 1 \le i \le k\}.$$
The unique partition of $0$ is $(0)$. Let
$\Phi$ the set of monic irreducible polynomials in $\mathbb{F}_q[x]$ with $x$ excluded.
A \textit{partition valued function} on $\Phi$ is a function $\delta: \Phi \mapsto \mathcal{P}\cup\{(0)\}$. $\delta$ has \textit{finite support} if $\delta(f) = (0)$ except for finitely many $f$ in $\Phi$.

Let $f(x) = a_0 + a_1x + a_2x^2+ \dots + a_{n-1}x^{n-1} + x^n\in \mathbb{F}_q[x]$ and $c(f)$ be its \textit{companion matrix}, i.e.,
$$
\newcommand*{\temp}{\multicolumn{1}{|}{}}
c(f) = 
\left[
\begin{array}{ccccc}
0 & 1 & 0 & \dots & 0 \\ 
0 & 0 & 1 & \dots & 0 \\ 
\vdots & \vdots & \vdots & \ddots & \vdots \\
0 & 0 & 0 & \dots & 1 \\
-a_0 & -a_1 & -a_2 & \dots & -a_{n-1} 
\end{array}
\right].
$$
For any $m\in\mathbb{N}\backslash\{0\}$, let $J_m(f)$ be the \textit{Jordan block matrix} of order $m$ with $c(f)$ on the main diagonal, i.e.,
$$
\newcommand*{\temp}{\multicolumn{1}{|}{}}
J_m(f) = 
\left[
\begin{array}{ccccc}
c(f) & I & 0 & \dots & 0 \\ 
0 & c(f) & I & \dots & 0 \\ 
\vdots & \vdots & \vdots & \ddots & \vdots \\
0 & 0 & 0 & \dots & I \\
0 & 0 & 0 & \dots & c(f) 
\end{array}
\right]_{m\times m}, 
$$
where $I$ is the identity matrix of order $\deg(f)$.
For $\lambda=(\lambda_1, \lambda_2, \dots, \lambda_k)\in\mathcal{P}$, let $J_{\lambda}(f)$ be the 
\textit{direct sum} of $J_{\lambda_i}(f)$, i.e.,
$$J_{\lambda}(f) = J_{\lambda_1}(f) \oplus J_{\lambda_2}(f) \oplus \dots \oplus J_{\lambda_k}(f),$$
which stands for
$$
\newcommand*{\temp}{\multicolumn{1}{|}{}}
\left[
\begin{array}{cccc}
J_{\lambda_1}(f) & 0 & \dots & 0 \\ 
0 & J_{\lambda_2}(f) & \dots & 0 \\ 
\vdots & \vdots & \ddots & \vdots \\
0 & 0 & \dots & J_{\lambda_k}(f)
\end{array}
\right].
$$

\textbf{Rational Canonical Form Theorem} implies that, for any matrix $M\in GL(n,\mathbb{F}_q)$, there exists a unique partition valued function $\delta$ on $\Phi$ with finite support such that $\sum_{f\in\Phi}\text{deg}f \cdot |\delta(f)| = n$ and $M$ is conjugate to
$$\bigoplus_{f\in\Phi, \delta(f)\ne(0)}J_{\delta(f)}(f).$$
For this reason, $J_\lambda(f)$ where $\lambda\in\mathcal{P}$ and $f\in\Phi$ are called \textit{minimal building blocks} of conjugacy classes of $GL(n,\mathbb{F}_q)$. Rational Canonical Form for non-invertible matrices does exist as long as  $\Phi$ admits $x$ as its member.

\section{Nilpotent matrices commuting with minimal building blocks}

Any partition $\lambda\in\mathcal{P}$ can be written in its ``\textit{exponential form}" $(1^{n_1}2^{n_2}3^{n_3}\cdots)$, which means there are exactly $n_i$ parts in $\lambda$ equal to $i$ for all $i\ge 1$. The \textit{weight} of $\lambda$, denoted by $|\lambda|$, is $\sum_{i\ge 1}in_i$, and the \textit{length} of $\lambda$, denoted by $l(\lambda)$, is $\sum_{i\ge 1}n_i$. 
Let $\varphi_r(q)=(1-q)(1-q^2)\cdots(1-q^r)$ for $r\in\mathbb{N}$ and $\varphi_0(q)=1$. Furthermore, define $b_\lambda(q) = \prod_{i\ge1}\varphi_{n_i}(q)$.

\begin{dfn}
For any matrix of order $m\times n$, the \textit{arm length} of index $(i,j)$ is one plus the number of minimal moves from $(i,j)$ to $(1,n)$, where diagonal moves are not permitted. Thus the arm length distribution is as follows:
$$
\left[
\begin{array}{llllll}
n & n-1 & \dots & 3 & 2 & 1\\ 
n+1 & n & \dots & 4 & 3 & 2\\ 
n+2 & n+1 & \dots & 5 & 4 & 3\\ 
\vdots & \vdots & \vdots & \vdots & \vdots & \vdots\\ 
m+n & m+n-1 & \dots & m+2 & m+1 & m
\end{array}
\right]_{m\times n}.
$$
The \textit{arm rank} of a matrix $M = [a_{ij}]$ of order $m\times n$, denoted by $ar(M)$, is the largest arm length of indexes of non-zero elements of $M$, i.e.,
$$ar(M) = \max\left\{\textit{arm length of }(i,j) \,|\, a_{ij} \ne 0 \textit{ where } 1\le i\le m, 1\le j\le n\right\}.$$
\end{dfn}

\begin{dfn}
A matrix $M = [a_{ij}]$ of order $m\times n$ is of type-U if it satisfies the following conditions:
\begin{itemize}
\item $a_{ij} = a_{st}$ if $(i,j)$ and $(s,t)$ have the same arm length,
\item the arm rank of $M$ is at most $\min(m,n)$.
\end{itemize}
\end{dfn}
Thus a type-U matrix has either the following form when $m\ge n$:
$$
\newcommand*{\temp}{\multicolumn{1}{|}{}}
\left[
\begin{array}{lllll}
a_1 & a_2 & \dots & a_{n-1} & a_n \\ 
0 & a_1 & \dots & a_{n-2} & a_{n-1} \\ 
\vdots & \vdots & \ddots & \vdots & \vdots \\
0 & 0 & \dots & a_1 & a_2 \\
0 & 0 & \dots & 0 & a_1 \\
\cline{1-5}
0 & 0 & \dots & 0 & 0 \\
\vdots & \vdots & \vdots & \vdots & \vdots \\
0 & 0 & \dots & 0 & 0
\end{array}
\right]_{m \times n},
$$
or the following form when $m\le n$:
$$
\newcommand*{\temp}{\multicolumn{1}{|}{}}
\left[
\begin{array}{lllllllll}
0 & \dots & 0 & \temp & a_1 & a_2 & \dots & a_{m-1} & a_m \\ 
0 & \dots & 0 & \temp & 0 & a_1 & \dots & a_{m-2} & a_{m-1} \\ 
\vdots & \vdots & \vdots & \temp & \vdots & \vdots & \ddots & \vdots & \vdots \\
0 & \dots & 0 & \temp & 0 & 0 & \dots & a_1 & a_2 \\
0 & \dots & 0 & \temp & 0 & 0 & \dots & 0 & a_1 
\end{array}
\right]_{m \times n}. 
$$

\begin{thm}[Turnbull \& Aitken  \cite{T-A 1948}]\label{T-A Thm}
Let $\lambda=(\lambda_1, \lambda_2, \dots, \lambda_k)$ be a partition with $\lambda_ 1\ge \lambda_2 \ge\dots\ge\lambda_k\ge 1$ and $f(x)=x-a_0$ with $a_0\in\mathbb{F}_q$, then any matrix over $\mathbb{F}_q$ that commutes with $J_\lambda(f)$ can be written as a $k\times k$ block matrix in the following form:
$$
\newcommand*{\temp}{\multicolumn{1}{|}{}}
\left[
\begin{array}{cccc}
U_{11} & U_{12} & \dots & U_{1k} \\ 
U_{21} & U_{22} & \dots & U_{2k} \\ 
\vdots & \vdots & \ddots & \vdots \\
U_{k1} & U_{k2} & \dots & U_{kk} 
\end{array}
\right],
$$
where submatrix $U_{ij}$ is a type-U matrix over $\mathbb{F}_q$ of order $\lambda_i\times \lambda_j$ for all $(i,j)$ where $1\le i ,j\le k$.
\end{thm}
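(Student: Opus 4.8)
The plan is to reduce the commuting condition to a system of scalar equations on the entries of each block and then recognize the solution set as precisely the type-U matrices. First I would write $J_\lambda(f) = a_0 I + N$, where $N = N_{\lambda_1}\oplus\cdots\oplus N_{\lambda_k}$ is the direct sum of nilpotent Jordan blocks (each $N_m$ being the $m\times m$ matrix with $1$'s on the superdiagonal and zeros elsewhere), which is legitimate because $f(x)=x-a_0$ has degree one, so its companion matrix is the scalar $a_0$ and $I$ is $1\times 1$. Since $a_0 I$ is central, a matrix $M$ commutes with $J_\lambda(f)$ if and only if it commutes with $N$. Partitioning $M=[M_{ij}]$ into blocks of size $\lambda_i\times\lambda_j$ and using that $N$ is block diagonal, the single equation $MN=NM$ decouples into the $k^2$ independent block equations
$$M_{ij}N_{\lambda_j} = N_{\lambda_i}M_{ij}, \qquad 1\le i,j\le k.$$
Thus it suffices to describe, for arbitrary $m,n$, all $m\times n$ solutions $U$ of the single intertwining equation $UN_n = N_m U$.

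Next I would translate this matrix equation into conditions on entries. Writing $U=[u_{rs}]$, right multiplication by $N_n$ shifts the columns while left multiplication by $N_m$ shifts the rows, so the equation reads $u_{r,s-1}=u_{r+1,s}$ for all $r,s$, under the conventions $u_{r,0}=0$ and $u_{m+1,s}=0$. The relation $u_{r,s-1}=u_{r+1,s}$ forces $U$ to be constant along each diagonal $\{(i,j):j-i=\text{const}\}$; since two indices have equal arm length exactly when they lie on the same such diagonal, this is precisely the first defining property of a type-U matrix. The two boundary conventions then supply the support constraint: the case $s=1$ gives $u_{r+1,1}=0$ for all $r$, annihilating every diagonal strictly below the main one, while the case $r=m$ gives $u_{m,s-1}=0$ for all $s$, annihilating every diagonal with $j-i<n-m$. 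Because $U$ is constant along diagonals, a single vanishing entry kills the whole diagonal, so the surviving (possibly nonzero) diagonals are exactly those with $j-i\ge\max(0,\,n-m)$.

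Finally I would check that this surviving set is the same as the arm rank condition. Since the arm length of $(i,j)$ equals $n-(j-i)$, the inequality $j-i\ge\max(0,\,n-m)$ is equivalent to arm length $\le\min(m,n)$, which is the second defining property of type-U. Reading off the two regimes recovers the displayed shapes: when $m\ge n$ the nonzero diagonals are $j-i\in\{0,1,\dots,n-1\}$, yielding an upper-triangular Toeplitz block atop a zero block; when $m\le n$ they are $j-i\in\{n-m,\dots,n-1\}$, yielding a zero block beside an upper-triangular Toeplitz block. In each case there are exactly $\min(m,n)$ free diagonal parameters $a_1,\dots,a_{\min(m,n)}$, in agreement with the stated form. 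Since every block $M_{ij}$ is governed independently by such an equation, assembling them proves that an arbitrary matrix commuting with $J_\lambda(f)$ has the asserted $k\times k$ type-U block structure.

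The main obstacle I expect is the bookkeeping in the boundary analysis: one must impose both zero-conventions simultaneously and carry out the case split $m\ge n$ versus $m\le n$ to confirm that the combined effect of the two boundary conditions is exactly the threshold $\max(0,\,n-m)$, equivalently the arm rank bound $\min(m,n)$, rather than a coarser estimate. Everything else is routine matrix arithmetic once the intertwining equation is reduced to the entrywise recurrence $u_{r,s-1}=u_{r+1,s}$.
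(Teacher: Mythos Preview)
Your argument is correct. The reduction $J_\lambda(f)=a_0I+N$ is valid since $\deg f=1$, the block decoupling into the intertwining equations $M_{ij}N_{\lambda_j}=N_{\lambda_i}M_{ij}$ is immediate from the block-diagonal form of $N$, and your entrywise analysis of $UN_n=N_mU$ correctly yields constancy along the diagonals $j-i=\mathrm{const}$ together with the vanishing threshold $j-i\ge\max(0,n-m)$, which you correctly identify with the arm rank bound $\min(m,n)$.

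As for comparison: the paper does not supply a proof of this theorem at all. It is quoted as a classical result of Turnbull and Aitken, with only an illustrative example given afterward. Your proof is the standard direct computation and would serve well as a self-contained justification; nothing in it conflicts with how the result is used downstream (in particular, the dimension count $\dim\mathcal{E}=\langle\lambda,\lambda\rangle$ in the proof of Theorem~\ref{degree 1} follows immediately from your observation that each block $U_{ij}$ carries exactly $\min(\lambda_i,\lambda_j)$ free parameters).
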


As an example, let $\lambda = (3, 2, 2)$ and $f(x) = x-t\in\mathbb{F}_q[x]$, $E$ a generic matrix that commutes with $J_\lambda(f)$, then 
$$\newcommand*{\temp}{\multicolumn{1}{|}{}}
J_\lambda(f)=\left[
\begin{array}{ccccccccc}
t & 1 & 0 & \temp & 0 & 0 & \temp & 0 & 0 \\ 
0 & t & 1 & \temp & 0 & 0 & \temp & 0 & 0 \\
0 & 0 & t & \temp & 0 & 0 & \temp & 0 & 0 \\
\cline{1-9}
0 & 0 & 0 & \temp & t & 1 & \temp & 0 & 0 \\ 
0 & 0 & 0 & \temp & 0 & t & \temp & 0 & 0 \\
\cline{1-9}
0 & 0 & 0 & \temp & 0 & 0 & \temp & t & 1 \\ 
0 & 0 & 0 & \temp & 0 & 0 & \temp & 0 & t 
\end{array}
\right]
, 
E=\left[
\begin{array}{cccccccccc}
a & b & c & \temp & l & m & \temp & p & q \\ 
0 & a & b & \temp & 0 & l & \temp & 0 & p \\
0 & 0 & a & \temp & 0 & 0 & \temp & 0 & 0 \\
\cline{1-9}
0 & r & s & \temp & d & e & \temp & h & i \\ 
0 & 0 & r & \temp & 0 & d & \temp & 0 & h \\
\cline{1-9}
0 & u & v & \temp & j & k & \temp & f & g \\ 
0 & 0 & u & \temp & 0 & j & \temp & 0 & f 
\end{array}
\right].
$$
\begin{thm}[Fine \& Herstein  \cite{F-H 1958}]\label{nil mat} For any positive integer $n$,
the number of nilpotent $n\times n$ matrices over $\mathbb{F}_q$ is equal to $q^{n^2-n}.$
\end{thm}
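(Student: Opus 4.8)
The plan is to prove the formula by a recursion on $n$ obtained from the Fitting decomposition, rather than by enumerating conjugacy classes directly. Write $N_n := |\mathcal{N}_n(\mathbb{F}_q)|$ and recall that every endomorphism $M$ of $V=\mathbb{F}_q^n$ admits a canonical, $M$-invariant splitting $V=V_0\oplus V_1$ (the Fitting decomposition, valid over any field) in which $M$ acts nilpotently on $V_0=\ker(M^n)$ and invertibly on $V_1=\mathrm{im}\,(M^n)$. First I would record the base case $N_0=1$ (the empty matrix is vacuously nilpotent, and $q^{0}=1$) and observe that this splitting sets up a bijection between $\mathcal{M}_n(\mathbb{F}_q)$ and the data consisting of an ordered pair of complementary subspaces $(V_0,V_1)$ with $\dim V_0=k$, a nilpotent operator on $V_0$, and an invertible operator on $V_1$. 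The correspondence is exact because the Fitting decomposition is uniquely determined by $M$, and conversely any such tuple reassembles to an $M$ whose Fitting splitting is precisely $(V_0,V_1)$.

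Next I would count $\mathcal{M}_n(\mathbb{F}_q)$ through this bijection. The number of ordered complementary pairs with $\dim V_0=k$ is $\binom{n}{k}_q\,q^{k(n-k)}$ (choose $V_0$, a Gaussian binomial coefficient, then choose a complement), the number of nilpotent operators on $V_0$ is $N_k$, and the number of invertible operators on $V_1$ is $|GL(n-k,\mathbb{F}_q)|$. Since $|\mathcal{M}_n(\mathbb{F}_q)|=q^{n^2}$ (there are $n^2$ free entries), this yields the recursion
$$q^{n^2}=\sum_{k=0}^{n}\binom{n}{k}_q\,q^{k(n-k)}\,N_k\,|GL(n-k,\mathbb{F}_q)|.$$
The $k=n$ term isolates $N_n$ with coefficient $1$, so the recursion determines $N_n$ uniquely from $N_0,\dots,N_{n-1}$; hence it suffices to verify that $N_k=q^{k^2-k}$ solves it, which I would do by induction on $n$.

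The verification reduces the recursion to a purely combinatorial identity, and this is the step I expect to carry the real content. Substituting $N_k=q^{k^2-k}$, dividing by $q^{n^2}$, and setting $t=q^{-1}$, all powers of $q$ collapse (using $|GL(m,\mathbb{F}_q)|=q^{m^2}\prod_{i=1}^m(1-q^{-i})$ and $\binom{n}{k}_q=t^{-k(n-k)}\binom{n}{k}_t$), and the claim becomes $(t;t)_n\sum_{k=0}^{n}t^{k}/(t;t)_k=1$, where $(t;t)_m=(1-t)(1-t^2)\cdots(1-t^m)$; equivalently $\sum_{k=0}^{n}t^{k}/(t;t)_k=1/(t;t)_n$. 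This last identity follows by a one-line induction on $n$, telescoping the top term via $(t;t)_n=(1-t^n)(t;t)_{n-1}$. The only genuine obstacles are bookkeeping: confirming that the Fitting decomposition is canonical over $\mathbb{F}_q$ so the bijection avoids over-counting, and tracking the exponents of $q$ through the Gaussian binomial so they cancel cleanly.

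Finally, I would note an alternative route closer to the machinery already developed. By the Rational Canonical Form Theorem specialized to $f(x)=x$, the nilpotent classes are indexed by partitions $\lambda\vdash n$, so $N_n=\sum_{\lambda\vdash n}|GL(n,\mathbb{F}_q)|/|C(\lambda)|$, where the centralizer order $|C(\lambda)|$ is read off from Theorem~\ref{T-A Thm} by counting the invertible type-U block matrices, a computation in which the factors $\varphi_{n_i}(q)$ assembled into $b_\lambda(q)$ appear. That approach replaces the elementary induction above with a generating-function identity of the shape $\sum_{\lambda}u^{|\lambda|}/|C(\lambda)|=\prod_{i\ge1}(1-uq^{-i})^{-1}$, which would be the main obstacle there; the Fitting recursion seems the more economical path, though the conjugacy-class version is the one that dovetails with the rest of the paper.
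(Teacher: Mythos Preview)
The paper does not supply its own proof of this theorem; it simply quotes the result from Fine and Herstein and uses it as a black box inside the proof of Theorem~\ref{degree 1}. So there is nothing in the paper to compare your argument against.

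That said, your Fitting-decomposition argument is correct. The bijection between $\mathcal{M}_n(\mathbb{F}_q)$ and triples $(V_0,V_1,M|_{V_0},M|_{V_1})$ is exact because the Fitting splitting is canonical, the count of ordered complementary pairs is $\binom{n}{k}_q q^{k(n-k)}$ as you state, and the resulting recursion does determine $N_n$ uniquely. Your reduction to the identity $\sum_{k=0}^{n} t^{k}/(t;t)_k = 1/(t;t)_n$ is clean and the telescoping induction goes through exactly as you describe. The alternative conjugacy-class route you sketch at the end is also valid and, as you note, is the one that meshes with the paper's later use of $b_\lambda(q^{-1})$; the relevant identity is essentially Euler's partition generating function specialized appropriately. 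Either argument is an acceptable proof of the cited theorem, and your Fitting recursion is in fact close in spirit to the original Fine--Herstein argument, which likewise partitions all matrices according to an invariant subspace decomposition.
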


For a partition $\lambda=(\lambda_1, \lambda_2, \lambda_3, \dots)$, let $\lambda'=(\lambda_1', \lambda_2', \lambda_3', \dots)$ be its \textit{conjugate partition}, which means that $\lambda_i'$ is the number of parts in $\lambda$ that are greater than or equal to $i$ for all $i\ge 1$. 

\begin{dfn} 
Let $\lambda, \mu$ be two partitions and  $\lambda'=(\lambda_1', \lambda_2', \lambda_3', \dots)$ , $\mu'=(\mu_1', \mu_2', \mu_3', \dots)$ be their conjugate partitions. The``inner product" of $\lambda$ and $\mu$  is defined as follows:
$$\langle\lambda, \mu\rangle = \sum_{i\ge 1}\lambda_i'\mu_i'.$$
\end{dfn}

\begin{lem}[Hua \cite{JH 2000}]
Let $\lambda = (1^{m_1}2^{m_2}3^{m_3}\cdots)$ and $\mu = (1^{n_1}2^{n_2}3^{n_3}\cdots)$ be two partitions in their ``exponential form'', then there holds:
$$\langle\lambda, \mu\rangle = \sum_{i\ge 1}\sum_{j\ge 1} min (i, j)m_in_j.$$
\end{lem}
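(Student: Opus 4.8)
The plan is to rewrite both sides entirely in terms of the exponential multiplicities $m_i$ and $n_j$, and then observe that the two expressions coincide after a routine interchange of summation. First I would translate the conjugate partition into exponential form. By definition $\lambda'_k$ counts the parts of $\lambda$ that are at least $k$, and $\lambda$ has exactly $m_i$ parts equal to $i$, so summing the multiplicities of all parts of size $\ge k$ yields the key identity
$$\lambda'_k = \sum_{i \ge k} m_i, \qquad \mu'_k = \sum_{j \ge k} n_j, \qquad k \ge 1.$$
Both sums are finite because $\lambda$ and $\mu$ have only finitely many parts, i.e. only finitely many $m_i$ and $n_j$ are nonzero.

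Substituting these identities into the definition of the inner product gives
$$\langle \lambda, \mu\rangle = \sum_{k \ge 1}\lambda'_k\mu'_k = \sum_{k \ge 1}\Bigl(\sum_{i \ge k} m_i\Bigr)\Bigl(\sum_{j \ge k} n_j\Bigr).$$
Next I would expand the product of the two inner sums into a double sum over $(i,j)$ and interchange the order of summation, pushing the sum over $k$ to the inside:
$$\langle \lambda, \mu\rangle = \sum_{k \ge 1}\sum_{i \ge k}\sum_{j \ge k} m_i n_j = \sum_{i \ge 1}\sum_{j \ge 1} m_i n_j \sum_{\substack{k \ge 1 \\ k \le i,\, k \le j}} 1.$$
The innermost sum merely counts the integers $k$ with $1 \le k \le \min(i,j)$, which equals $\min(i,j)$, and this delivers the claimed formula $\langle\lambda,\mu\rangle = \sum_{i\ge1}\sum_{j\ge1}\min(i,j)\,m_i n_j$.

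This computation presents no genuine obstacle; the only point requiring a word of care is the legitimacy of reordering the summations. Because only finitely many multiplicities are nonzero, every sum above is in fact a finite sum, so the interchange is unconditionally valid and no convergence issue can arise. I would therefore present the argument simply as the short chain of equalities displayed above, inserting the finite-support remark precisely at the reordering step to justify it.
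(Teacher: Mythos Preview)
Your argument is correct and entirely standard: expressing $\lambda'_k=\sum_{i\ge k}m_i$ and interchanging the order of summation is exactly how this identity is established. The paper itself does not supply a proof of this lemma, merely citing it from Hua~\cite{JH 2000}, so there is nothing to compare; your write-up fills the gap cleanly.
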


\begin{thm}\label{degree 1}
For any partition $\lambda\in\mathcal{P}$ and $f(x)=x-a_0\in\mathbb{F}_q[x]$, the number of nilpotent matrices over $\mathbb{F}_q$ that commute with $J_\lambda(f)$ is 
$q^{\langle\lambda, \lambda\rangle - l(\lambda)}.
$
\end{thm}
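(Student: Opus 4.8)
The plan is to recognize the set $C$ of matrices commuting with $J_\lambda(f)$ as a finite-dimensional associative $\mathbb{F}_q$-algebra and to count its nilpotent elements through its semisimple quotient. First I would reduce to the nilpotent case: since $f(x)=x-a_0$ is linear, $N:=J_\lambda(f)-a_0I$ is nilpotent and a matrix $M$ commutes with $J_\lambda(f)$ if and only if it commutes with $N$, so the commutant of $J_\lambda(f)$ equals that of $N$; call this algebra $C$. By Theorem \ref{T-A Thm} every element of $C$ is a $k\times k$ array of type-U blocks $U_{ij}$ of order $\lambda_i\times\lambda_j$, and since each such block carries exactly $\min(\lambda_i,\lambda_j)$ free parameters, $\dim_{\mathbb{F}_q}C=\sum_{i,j}\min(\lambda_i,\lambda_j)$. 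Grouping the parts of $\lambda$ by size and invoking the preceding inner-product Lemma identifies this sum with $\langle\lambda,\lambda\rangle$, so $|C|=q^{\langle\lambda,\lambda\rangle}$.

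Next I would pin down the algebra structure of $C$. Writing $\lambda=(1^{m_1}2^{m_2}3^{m_3}\cdots)$, I claim the Jacobson radical $R$ of $C$ is precisely the set of arrays whose leading (diagonal) coefficients within each group of equal parts vanish, and that
$$ C/R\;\cong\;\prod_{p}M_{m_p}(\mathbb{F}_q), $$
the product running over the distinct part sizes $p$ appearing in $\lambda$. Concretely, assigning to each element of $C$ the tuple of $m_p\times m_p$ matrices formed from the leading entries of the type-U blocks indexed by parts equal to $p$ is an algebra homomorphism onto $\prod_p M_{m_p}(\mathbb{F}_q)$; a Toeplitz computation shows its kernel is a nilpotent ideal, hence equals $R$. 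This gives $\dim_{\mathbb{F}_q}(C/R)=\sum_p m_p^2$ and therefore $\dim_{\mathbb{F}_q}R=\langle\lambda,\lambda\rangle-\sum_p m_p^2$.

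With the structure in hand the count is short. Because $R$ is a nilpotent two-sided ideal, an element $x\in C$ is nilpotent if and only if its image $\bar x\in C/R$ is nilpotent (forward trivial; backward because $x^N\in R$ forces $x$ nilpotent). Hence the nilpotent elements of $C$ are exactly the union of the cosets $x+R$ taken over nilpotent $\bar x$, so their number is $|R|$ times the number of nilpotent elements of $C/R$. An element of a product algebra is nilpotent iff each coordinate is, and by Theorem \ref{nil mat} the algebra $M_{m_p}(\mathbb{F}_q)$ has $q^{m_p^2-m_p}$ nilpotents, so $C/R$ has $\prod_p q^{m_p^2-m_p}$ of them. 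Multiplying,
$$ q^{\langle\lambda,\lambda\rangle-\sum_p m_p^2}\cdot\prod_p q^{m_p^2-m_p}=q^{\langle\lambda,\lambda\rangle-\sum_p m_p}=q^{\langle\lambda,\lambda\rangle-l(\lambda)}, $$
since $\sum_p m_p=l(\lambda)$, which is the claimed formula.

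The crux is the structural step. I expect the main difficulty to be verifying that the mismatched-size blocks, together with the strictly upper Toeplitz parts of the equal-size diagonal blocks, form exactly the radical: one must check that products of type-U blocks of unequal sizes strictly drop the arm rank (so such contributions are nilpotent), while the leading coefficients multiply like ordinary matrix entries within each equal-part group, giving surjectivity onto $\prod_p M_{m_p}(\mathbb{F}_q)$. Everything else — the dimension count from Theorem \ref{T-A Thm} together with the inner-product Lemma, the coset argument, and the substitution from Theorem \ref{nil mat} — is then routine bookkeeping.
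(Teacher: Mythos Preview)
Your proposal is correct and is essentially the paper's own argument: the paper too computes $\dim\mathcal{E}=\langle\lambda,\lambda\rangle$ from Theorem~\ref{T-A Thm}, splits $\mathcal{E}$ as a vector-space direct sum $\mathcal{D}\oplus\mathcal{N}$ with $\mathcal{N}$ the nilpotent two-sided ideal you call $R$ and $\mathcal{D}\cong\prod_i M_{n_i}(\mathbb{F}_q)$ the explicit lift of your semisimple quotient $C/R$, observes that $E=D+N$ is nilpotent iff $D$ is, and finishes with Theorem~\ref{nil mat}. The only difference is vocabulary---you speak of the Jacobson radical and the quotient $C/R$, the paper of the concrete subspaces $\mathcal{D}$ and $\mathcal{N}$---and your identification of the ``crux'' (that the off-size and strictly-upper-Toeplitz blocks form the radical) is exactly what the paper asserts when it says $\mathcal{N}$ is a nilpotent two-sided ideal.
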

\begin{proof}
Suppose that $\lambda=(\lambda_1, \lambda_2, \dots, \lambda_k)$ with $\lambda_ 1\ge \lambda_2 \ge\dots\ge\lambda_k\ge 1$ and 
let $\mathcal{E}$ be the set of all matrices over $\mathbb{F}_q$ that commute with $J_\lambda(f)$, i.e.,
$$\mathcal{E} = \left\{M\in\mathcal{M}_v(\mathbb{F}_q)\, |\, MJ_\lambda(f) =J_\lambda(f)M, v = |\lambda|\right\}.$$
$\mathcal{E}$ is indeed the endomorphism algebra of the representation of $\mathbb{F}_q[x]$ induced by $J_\lambda(f)$. Theorem \ref{T-A Thm} implies that 
$$
\mathcal{E} = \left\{
\left[
\begin{array}{llll}
U_{11} & U_{12} & \dots & U_{1k} \\ 
U_{21} & U_{22} & \dots & U_{2k} \\ 
\vdots & \vdots & \ddots & \vdots \\
U_{k1} & U_{k2} & \dots & U_{kk} 
\end{array}
\right] \Bigg\rvert 
\begin{array}{l}
U_{ij} \textit{ is a type-U matrix over } \mathbb{F}_q,\\ 
U_{ij} \textit{ is of order } \lambda_i \times \lambda_j\textit{ for }1\le i,j\le k\\

\end{array}
\right\}.
$$

Let $(1^{n_1}2^{n_2}3^{n_3}\cdots)$ be the ``exponential form'' for $\lambda$. Thus there are $n_i$ parts equal to $i$. 
$\mathcal{E}$ has finite dimension over $\mathbb{F}_q$, 
the dimension that is contributed by the submatrices of order $i\times j$ for all paris $(i, j)$ is:
\begin{itemize}
\item $\min(i,j)n_in_j \textit{ if } i \ne j$,
\item $in_i^2 \textit{ if } i = j$.
\end{itemize}
Thus the dimension of $\mathcal{E}$ is:
$$\sum_{i\ge 1}\sum_{j\ge1, j\ne i} \min(i,j)n_in_j + \sum_{i\ge 1} in_i^2 = \sum_{i\ge 1}\sum_{j\ge 1} \min(i,j)n_in_j = \langle\lambda,\lambda\rangle.$$
It follows that the order of $\mathcal{E}$ is:
$$|\mathcal{E}| = q^{\langle\lambda,\lambda\rangle}.$$
Let $\mathcal{D}$ be the subspace of $\mathcal{E}$ defined as follows:
$$\mathcal{D} = \left\{
\left[
\begin{array}{llll}
D_{11} & D_{12} & \dots & D_{1k} \\ 
D_{21} & D_{22} & \dots & D_{2k} \\ 
\vdots & \vdots & \ddots & \vdots \\
D_{k1} & D_{k2} & \dots & D_{kk} 
\end{array}
\right]\in\mathcal{E}\,\Bigg| 
\begin{array}{l}
D_{ij} \textit{ is a matrix of order }\lambda_i \times \lambda_j, \\ 
D_{ij} = 0 \textit{ if } \lambda_i \ne \lambda_j, \\
D_{ij} = aI \textit{ for some $a\in\mathbb{F}_q$ if $\lambda_i=\lambda_j$}
\end{array}
\right\},
$$
and $\mathcal{N}$ be the subspace of $\mathcal{E}$ defined by:
$$
\mathcal{N} = \left\{
\left[
\begin{array}{llll}
N_{11} & N_{12} & \dots & N_{1k} \\ 
N_{21} & N_{22} & \dots & N_{2k} \\ 
\vdots & \vdots & \ddots & \vdots \\
N_{k1} & N_{k2} & \dots & N_{kk} 
\end{array}
\right]\in\mathcal{E}\,\Bigg| 
\begin{array}{l}
N_{ij} \textit{ is a type-U matrix of order } \lambda_i \times \lambda_j, \\ 
ar( N_{ij}) \le \lambda_i-1 \textit{ if } \lambda_i = \lambda_j.
\end{array}
\right\}.
$$
It is evident that $\mathcal{E}$ is a direct sum of $\mathcal{D}$ and $\mathcal{N}$ as a vector space. It can be verified that $\mathcal{N}$ is a two-sided ideal of $\mathcal{E}$. Furthermore $\mathcal{N}$ is nilpotent, i.e., every element in $\mathcal{N}$ is nilpotent. Thus every matrix $E\in\mathcal{E}$ can be uniquely written as a sum of a matrix from $\mathcal{D}$ and a matrix from $\mathcal{N}$, i.e.,
$$E = D + N \textit{ for some } D\in \mathcal{D} \textit{ and } N \in \mathcal{N}.$$
Since $\mathcal{N}$ is a nilpotent two-sided ideal of $\mathcal{E}$, $E$ is nilpotent if and only if $D$ is nilpotent.

As an example, if $\lambda = (3, 2, 2)$, then any matrix in $\mathcal{E}$ can be written as a sum as follows:
$$\newcommand*{\temp}{\multicolumn{1}{|}{}}
\left[
\begin{array}{ccccccccc}
a & 0 & 0 & \temp & 0 & 0 & \temp & 0 & 0 \\ 
0 & a & 0 & \temp & 0 & 0 & \temp & 0 & 0 \\
0 & 0 & a & \temp & 0 & 0 & \temp & 0 & 0 \\
\cline{1-9}
0 & 0 & 0 & \temp & d & 0 & \temp & h & 0 \\ 
0 & 0 & 0 & \temp & 0 & d & \temp & 0 & h \\
\cline{1-9}
0 & 0 & 0 & \temp & j & 0 & \temp & f & 0 \\ 
0 & 0 & 0 & \temp & 0 & j & \temp & 0 & f 
\end{array}
\right] +
\left[
\begin{array}{ccccccccc}
0 & b & c & \temp & l & m & \temp & p & q \\ 
0 & 0 & b & \temp & 0 & l & \temp & 0 & p \\
0 & 0 & 0 & \temp & 0 & 0 & \temp & 0 & 0 \\
\cline{1-9}
0 & r & s & \temp & 0 & e & \temp & 0 & i \\ 
0 & 0 & r & \temp & 0 & 0 & \temp & 0 & 0 \\
\cline{1-9}
0 & u & v & \temp & 0 & k & \temp & 0 & g \\ 
0 & 0 & u & \temp & 0 & 0 & \temp & 0 & 0 
\end{array}
\right].
$$
Every matrix $D\in\mathcal{D}$ can be viewed as a diagonal block matrix. Since there are $n_i$ parts equal to $i$ in $\lambda$, the block corresponding to $i^{n_i}$ is:
$$
D_i = 
\left[
\begin{array}{cccc}
a_{11}I & a_{12}I & \dots & a_{1n_i}I \\ 
a_{21}I &a_{22}I & \dots & a_{2n_i}I \\ 
\vdots & \vdots & \ddots & \vdots \\
a_{n_i1}I & a_{n_i2}I & \dots & a_{n_in_i}I 
\end{array}
\right],
$$
where $I$ is the identity matrix of order $i$ and all $a_{ij}\in\mathbb{F}_q$. Thus $D$ is nilpotent if and only if every diagonal block $D_i$ is nilpotent. Since $D_i$ is conjugate to the direct sum of $i$ copies of the following matrix:
$$
d_i = 
\left[
\begin{array}{cccc}
a_{11} & a_{12} & \dots & a_{1n_i} \\ 
a_{21} & a_{22} & \dots & a_{2n_i} \\ 
\vdots & \vdots & \ddots & \vdots \\
a_{n_i1} & a_{n_i2} & \dots & a_{n_in_i}
\end{array}
\right],
$$
$D_i$ is nilpotent if and only if $d_i$ is nilpotent. Since the number of nilpotent matrix of order $n_i$ over $\mathbb{F}_q$ is $q^{n_i^2-n_i}$ by Theorem \ref{nil mat}, the number of nilpotent 
matrices in $\mathcal{D}$ is:
$$q^{\sum_{i\ge 1}n_i^2-n_i}.$$
The dimension of $\mathcal{N}$ is:
$$\dim(\mathcal{E}) - \dim(\mathcal{D}) = \langle\lambda,\lambda\rangle - \sum_{i\ge 1}n_i^2.$$
Thus the order of $\mathcal{N}$ is:
$$|\mathcal{N}| = q^{ \langle\lambda,\lambda\rangle - \sum_{i\ge 1}n_i^2}.$$
Putting it all together, the number of nilpotent matrices in $\mathcal{E}$ is:
$$q^{\langle\lambda,\lambda\rangle - \sum_{i\ge 1}n_i^2}\cdot q^{\sum_{i\ge 1}n_i^2-n_i} 
= q^{\langle\lambda,\lambda\rangle - \sum_{i\ge 1}n_i} 
= q^{\langle\lambda,\lambda\rangle - l(\lambda)}.
$$
This finishes the proof.
\end{proof}

\begin{thm}\label{nil} For any partition $\lambda\in\mathcal{P}$ and any monic irreducible polynomial $f \in \mathbb{F}_q[x]$, the number of nilpotent matrices over $\mathbb{F}_q$ that commute with $J_\lambda(f)$ is $q^{d(\langle\lambda, \lambda\rangle - l(\lambda))}$,
where $d$ is the degree of $f$.
\end{thm}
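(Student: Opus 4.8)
The plan is to reduce the general case to the degree-one case already settled in Theorem \ref{degree 1}, by passing from $\mathbb{F}_q$ to the extension field $\mathbb{F}_{q^d}$ that $f$ naturally produces. The starting observation is that, since $f$ is irreducible of degree $d$, the companion matrix $c(f)$ satisfies $f(c(f))=0$ and therefore generates a subfield $\mathbb{F}_q[c(f)]\cong\mathbb{F}_q[x]/(f)\cong\mathbb{F}_{q^d}$ of $\mathcal{M}_d(\mathbb{F}_q)$. Writing $v=|\lambda|$ and $N=\lambda_1$, the matrix $J_\lambda(f)$ turns $\mathbb{F}_q^{vd}$ into the $\mathbb{F}_q[x]$-module $M=\bigoplus_i \mathbb{F}_q[x]/(f^{\lambda_i})$, and the set $\mathcal{E}$ of matrices commuting with $J_\lambda(f)$ is exactly $\mathrm{End}_{\mathbb{F}_q[x]}(M)$; the matrices we must count are its nilpotent elements.

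First I would record the ring isomorphism $\mathbb{F}_q[x]/(f^m)\cong\mathbb{F}_{q^d}[t]/(t^m)$ for every $m\le N$. Because $\mathbb{F}_q$ is perfect, $f$ is separable, so $f'$ is a unit modulo $f$ and Hensel's lemma lifts the residue of $x$ to an element $\alpha\in\mathbb{F}_q[x]/(f^N)$ with $f(\alpha)=0$; the subfield $K=\mathbb{F}_q[\alpha]\cong\mathbb{F}_{q^d}$ is then a coefficient field, and $\pi=f$ generates the maximal ideal with $\pi^m=0$ but $\pi^{m-1}\ne 0$, giving $\mathbb{F}_q[x]/(f^m)=K[\pi]/(\pi^m)\cong\mathbb{F}_{q^d}[t]/(t^m)$. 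The crucial and easily overlooked point is that $\alpha$ and $t$ are themselves polynomials in the image of $x$, so they act on $M$ through the same operator algebra as $x$ does. Carrying the isomorphisms across all summands (using the single lift $\alpha$ reduced from $\mathbb{F}_q[x]/(f^N)$, which is compatible by uniqueness of Hensel lifts) identifies $M$, as an $\mathbb{F}_q$-space, with the $\mathbb{F}_{q^d}[t]$-module $M'=\bigoplus_i \mathbb{F}_{q^d}[t]/(t^{\lambda_i})$, which is precisely the module afforded by the nilpotent Jordan matrix $J_\lambda(t)$ over $\mathbb{F}_{q^d}$.

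The next step is to check that $\mathrm{End}_{\mathbb{F}_q[x]}(M)$ and $\mathrm{End}_{\mathbb{F}_{q^d}[t]}(M')$ coincide as subsets of $\mathrm{End}_{\mathbb{F}_q}(M)$. An $\mathbb{F}_q[x]$-endomorphism commutes with $x$, hence with every polynomial in $x$, in particular with $\alpha$ (so it is $K$-linear, i.e. $\mathbb{F}_{q^d}$-linear) and with $t$; conversely an $\mathbb{F}_{q^d}$-linear map commuting with $t$ commutes with $\alpha+(\text{unit})t=x$. Since nilpotency of an operator is independent of the ground field over which linearity is imposed, the nilpotent elements of the two endomorphism rings are literally the same operators. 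Applying Theorem \ref{degree 1} over the field $\mathbb{F}_{q^d}$ to the degree-one polynomial $t$ counts those nilpotent elements as $(q^d)^{\langle\lambda,\lambda\rangle-l(\lambda)}=q^{d(\langle\lambda,\lambda\rangle-l(\lambda))}$, which is the desired total over $\mathbb{F}_q$.

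The main obstacle is entirely in the second paragraph: establishing the splitting $\mathbb{F}_q[x]/(f^m)\cong\mathbb{F}_{q^d}[t]/(t^m)$ and, more importantly, guaranteeing that the coefficient field and the uniformizer are realized inside the operator algebra generated by $J_\lambda(f)$, so that the two centralizers genuinely agree. A reader preferring an elementary route can instead generalize the Turnbull--Aitken description of Theorem \ref{T-A Thm} to arbitrary $d$ — replacing each scalar entry of a type-U block by a $d\times d$ polynomial in $c(f)$ — and rerun the computation of Theorem \ref{degree 1} verbatim with $\mathbb{F}_q$ replaced by $\mathbb{F}_{q^d}$: the $\mathbb{F}_q$-dimension of $\mathcal{E}$ becomes $d\langle\lambda,\lambda\rangle$, the semisimple quotient becomes $\bigoplus_i \mathcal{M}_{n_i}(\mathbb{F}_{q^d})$, and Theorem \ref{nil mat} over $\mathbb{F}_{q^d}$ supplies the factor $q^{d(\sum_i n_i^2-l(\lambda))}$, yielding the same answer.
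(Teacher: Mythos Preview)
Your proposal is correct. Your \emph{alternative} route in the final paragraph --- generalize the Turnbull--Aitken block description so that each scalar entry of a type-$U$ block is replaced by an element of $\langle c(f)\rangle\cong\mathbb{F}_{q^d}$, then rerun the count of Theorem~\ref{degree 1} over $\mathbb{F}_{q^d}$ --- is exactly what the paper does, in essentially those words and at that level of detail.

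Your \emph{primary} route is genuinely different. Rather than assert a higher-degree Turnbull--Aitken, you work module-theoretically: Hensel (equivalently, Cohen's structure theorem for the equicharacteristic Artinian local ring $\mathbb{F}_q[x]/(f^m)$) supplies a coefficient field $K\cong\mathbb{F}_{q^d}$ and a uniformizer $t=f$, giving $\mathbb{F}_q[x]/(f^m)\cong\mathbb{F}_{q^d}[t]/(t^m)$; since $\alpha$ and $t$ generate the same $\mathbb{F}_q$-algebra as $x$, the centralizer of $J_\lambda(f)$ over $\mathbb{F}_q$ is literally the centralizer of $J_\lambda(t)$ over $\mathbb{F}_{q^d}$, and Theorem~\ref{degree 1} applies directly. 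This buys you rigor --- the paper's proof leans on an unproved extension of Theorem~\ref{T-A Thm} to $\deg f>1$, whereas you bypass that description entirely --- at the cost of invoking Hensel's lemma and checking the compatibility of the coefficient-field lift across the summands $\mathbb{F}_q[x]/(f^{\lambda_i})$ (which you handle correctly via uniqueness of the Teichm\"uller lift). The paper's route, by contrast, stays concrete and makes the structure of $\mathcal{E}$ as explicit as in the linear case, which is what one wants for the later power-series manipulations, though strictly speaking it owes the reader a verification of the generalized block form.
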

\begin{proof}
Suppose that $d>1$ as the case for $d=1$ has been proved in Theorem \ref{degree 1}. Let $c(f)$ be the companion matrix for $f$ and $\langle c(f) \rangle$ be the subalgebra of $\mathcal{M}_d(\mathbb{F}_q)$ generated by $c(f)$. Since $f$ is the characteristic equation of $c(f)$, $c(f)$ satisfies the polynomial $f$, i.e., $f(c(f))=0$. Since $f$ is irreducible, $f$ is the minimal polynomial satisfied by $c(f)$. This implies that 
$I, c(f), c(f)^2, \cdots, c(f)^{d-1}$ form a basis for $\langle c(f) \rangle$ over $\mathbb{F}_q$, i.e.,
$$\langle c(f) \rangle = \left\{\sum_{i=0}^{d-1}a_ic(f)^i \,|\, a_i\in\mathbb{F}_q, 0\le i \le d-1 \right\}.$$
Thus $\langle c(f) \rangle$ is a commutative subalgebra of $\mathcal{M}_d(\mathbb{F}_q)$ and the following map is an isomorphism:
\begin{align*} 
\mathbb{F}_q[x]/(f(x)) &\mapsto \langle c(f) \rangle \\
x &\mapsto c(f).
\end{align*}
Since $f$ is irreducible, $\mathbb{F}_q[x]/(f(x))$ is isomorphic to the finite field $\mathbb{F}_{q^d}$, and hence $\langle c(f) \rangle$ is a finite field with $q^d$ elements. 

When $\deg(f) > 1$, Theorem \ref{T-A Thm} still holds as long as all submatrices $U_{ij}$ take values from the finite field $\langle c(f) \rangle$. All arguments in the proof of Theorem \ref{degree 1} still work with $\mathbb{F}_q$ being replaced by $\langle c(f) \rangle$. Thus Theorem \ref{degree 1} implies the desired results.
\end{proof}

\section{Calculating numbers of absolutely indecomposable orbits}
Let $\mathbb{Q}$ be the rational number field, $\mathbb{Q}[[X]]$ be the ring of formal power series in $X$ over $\mathbb{Q}$, $\mathbb{Q}(q)$ be the field of rational functions in $q$ over 
$\mathbb{Q}$ , and $\mathbb{Q}(q)[[X]]$ be the ring of formal power series in $X$ over $\mathbb{Q}(q)$.
Let $\phi_n(q)$ be the number of monic irreducible polynomials with degree $n$ in $\mathbb{F}_q[x]$ with $x$ excluded. It is known that for any positive integer $n$, 
$$\phi_n(q) = \frac{1}{n}\sum_{d\,|\,n}\mu(d)(q^{\frac{n}{d}}-1),$$
where the sum runs over all divisors of $n$ and $\mu$ is the Möbius function. Following Hua \cite{JH 2000}, let
$$P(X,q) = 1 + \sum_{\lambda\in\mathcal{P}} \frac{q^{g(\langle\lambda, \lambda\rangle - l(\lambda))}}{q^{\langle\lambda, \lambda\rangle}b_\lambda(q^{-1})}\,X^{|\lambda|}.$$

\begin{thm} The following identity holds in $\mathbb{Q}[[X]]$:
$$
1 + \sum_{n=1}^\infty M_g(n,q)X^n =\prod_{d=1}^\infty \left(P(X^d, q^d)\right) ^{\phi_d(q)}.
$$
\end{thm}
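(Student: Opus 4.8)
The plan is to count orbits via \textbf{Burnside's lemma} (the orbit-counting theorem) and then organize the average over the group by conjugacy classes. Writing $G = GL(n,\mathbb{F}_q)$ and, for $T\in G$, $\mathrm{Fix}(T)$ for the set of $g$-tuples $(M_1,\dots,M_g)\in\mathcal{N}_n(\mathbb{F}_q)^g$ fixed under conjugation by $T$, Burnside gives $M_g(n,q) = |G|^{-1}\sum_{T\in G}|\mathrm{Fix}(T)|$. Since $|\mathrm{Fix}(T)|$ is constant on each conjugacy class and the class of $T$ has size $|G|/|Z(T)|$, where $Z(T)$ is the centralizer of $T$ in $G$, this collapses to $M_g(n,q) = \sum_{[T]} |\mathrm{Fix}(T)|/|Z(T)|$, the sum running over conjugacy classes. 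A tuple lies in $\mathrm{Fix}(T)$ exactly when each $M_i$ is a nilpotent matrix commuting with $T$, and since the coordinates are independent, $|\mathrm{Fix}(T)| = \nu(T)^g$, where $\nu(T)$ is the number of nilpotent matrices commuting with $T$.

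Next I would run the count through the Rational Canonical Form. A class $[T]$ corresponds to a finite-support partition-valued function $\delta$ on $\Phi$ with $\sum_{f}\deg f\,|\delta(f)| = n$, and $T$ is conjugate to $\bigoplus_{f}J_{\delta(f)}(f)$; note $x\notin\Phi$ precisely because $T$ is invertible. Viewing $\mathbb{F}_q^n$ as an $\mathbb{F}_q[x]$-module via $T$, its primary decomposition into components for distinct irreducible $f$ has no cross-homomorphisms, so the commuting algebra factors as a product over $f$; a commuting matrix is nilpotent iff each block is, whence $\nu(T) = \prod_{f} q^{\deg f(\langle\delta(f),\delta(f)\rangle - l(\delta(f)))}$ by Theorem \ref{nil}. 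For the centralizer order I would reuse the splitting $\mathcal{E} = \mathcal{D}\oplus\mathcal{N}$ from the proof of Theorem \ref{degree 1}: here $\mathcal{N}$ is the radical, so $E = D+N$ is a \emph{unit} iff $D$ is, giving $|Z(J_\lambda(f))| = |\mathcal{N}|\cdot|\mathcal{D}^\times|$. With $\mathcal{D}^\times \cong \prod_i GL(n_i,\mathbb{F}_{q^d})$ for $\lambda=(1^{n_1}2^{n_2}\cdots)$ and $d=\deg f$, and $|GL(r,\mathbb{F}_{q^d})| = q^{dr^2}\varphi_r(q^{-d})$, this yields $|Z(J_\lambda(f))| = q^{d\langle\lambda,\lambda\rangle}\,b_\lambda(q^{-d})$, hence $|Z(T)| = \prod_f q^{\deg f\langle\delta(f),\delta(f)\rangle}b_{\delta(f)}(q^{-\deg f})$.

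Assembling the ratio and forming the generating function, I obtain
\[
1 + \sum_{n\ge 1}M_g(n,q)X^n = \sum_{\delta}\prod_{f}\frac{q^{g\deg f(\langle\delta(f),\delta(f)\rangle - l(\delta(f)))}}{q^{\deg f\langle\delta(f),\delta(f)\rangle}\,b_{\delta(f)}(q^{-\deg f})}\,X^{\sum_f \deg f|\delta(f)|},
\]
where $\delta$ ranges over all finite-support partition-valued functions, the trivial one supplying the constant term $1$. Because a choice of $\delta$ is an independent choice of a partition at each $f$ (the value $(0)$ contributing a factor $1$), the sum factors as the Euler product $\prod_{f\in\Phi}\bigl(1 + \sum_{\lambda\in\mathcal{P}}\frac{q^{g\deg f(\langle\lambda,\lambda\rangle-l(\lambda))}}{q^{\deg f\langle\lambda,\lambda\rangle}b_\lambda(q^{-\deg f})}X^{\deg f|\lambda|}\bigr)$. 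I would then observe that the factor for an $f$ of degree $d$ is exactly $P(X^d,q^d)$, obtained from the definition of $P$ by the substitutions $q\mapsto q^d$ and $X\mapsto X^d$; grouping the $\phi_d(q)$ polynomials of each degree $d$ produces $\prod_{d\ge 1}\bigl(P(X^d,q^d)\bigr)^{\phi_d(q)}$, as claimed.

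The essential checkpoints are the centralizer order and the formal legitimacy of the Euler factorization. The centralizer computation is the main piece of new bookkeeping, although it is a direct dual of the nilpotent count already performed; the delicate observation is that the single splitting $\mathcal{E}=\mathcal{D}\oplus\mathcal{N}$ simultaneously governs both the nilpotents (through $\mathcal{D}$ nilpotent) and the units (through $\mathcal{D}^\times$). Convergence is not a genuine obstacle: since $P(X^d,q^d) = 1 + O(X^d)$, only finitely many factors influence each coefficient of $X^n$, so every rearrangement above is a valid identity in $\mathbb{Q}(q)[[X]]$ that specializes to $\mathbb{Q}[[X]]$ without difficulty.
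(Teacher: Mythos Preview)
Your proposal is correct and follows essentially the same route as the paper: Burnside's orbit-counting formula, parametrization of conjugacy classes by rational canonical form, multiplicativity of both the fixed-point count and the centralizer order over primary components, and the Euler-product factorization with factors identified as $P(X^d,q^d)$. The paper's own proof simply cites Theorem~4.3 of Hua~\cite{JH 2000} and notes that the fixed-point count is supplied by Theorem~\ref{nil}; your write-up is a faithful (and more explicit) unpacking of that argument, including the centralizer formula $|Z(J_\lambda(f))|=q^{d\langle\lambda,\lambda\rangle}b_\lambda(q^{-d})$, which is exactly the computation implicit in the reference.
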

\begin{proof}
The method applied in Theorem 4.3 from Hua \cite{JH 2000} still works here.
In current context, the Burnside orbit counting formula is applied to $\mathcal{N}_n(\mathbb{F}_q)^g/GL(n, \mathbb{F}_q)$ and the number of points fixed by
$J_\lambda(f)$ is equal to $q^{\textup{deg}(f)g(\langle\lambda, \lambda\rangle - l(\lambda))}$ by Theorem \ref{nil}. Repeating the arguments there yields the desired result.
\end{proof}

\begin{dfn}
Define rational functions $H_g(n,q)$ for all positive integer $n$ as follows:
$$ 
\log\left(P(X, q)\right) = \sum_{n=1}^\infty H_g(n,q)X^n,
$$
where $\log$ is the formal logarithm, i.e., $\log(1+x) = \sum_{i\ge 1} (-1)^{i-1} x^i/i$.
\end{dfn}

\begin{thm}\label{A_g(n,q)}The following identity holds for all positive integer $n$:
$$
A_g(n,q) = (q-1) \sum_{d\,|\,n}\frac{\mu(d)}{d}H_g\Big(\frac{n}{d}, q^d\Big).
$$
\end{thm}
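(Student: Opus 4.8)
The plan is to reduce the theorem to two classical structural principles — the Krull–Schmidt theorem over $\mathbb{F}_q$ and Galois descent for absolutely indecomposable representations — and then to finish with two Möbius inversions. Throughout write $\mathcal{M}(X,q) = 1 + \sum_{n\ge 1} M_g(n,q)X^n$ and let $L_n(q)$ be the coefficient of $X^n$ in $\log\mathcal{M}(X,q)$. The heart of the argument is the intermediate identity
$$L_n(q) = \sum_{d\mid n}\frac{1}{d}\,A_g\!\left(\tfrac{n}{d},q^d\right),$$
which is the exponential (motivic) relation expressing all orbits through absolutely indecomposable ones; once it is in hand the theorem is pure bookkeeping.

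First I would prove this intermediate identity. Since the category of $g$-tuples of nilpotent matrices is closed under direct sums and direct summands and all endomorphism rings are finite, Krull–Schmidt gives a unique decomposition into indecomposables, so $\mathcal{M}(X,q) = \prod_{m\ge 1}(1-X^m)^{-I_g(m,q)}$ and hence $L_n(q) = \sum_{e\mid n}\frac{1}{e}I_g(n/e,q)$. Next I would pass from $I_g$ to $A_g$ by Galois descent: an indecomposable over $\mathbb{F}_q$ whose local endomorphism ring has residue field $\mathbb{F}_{q^d}$ splits over $\overline{\mathbb{F}}_q$ into a single Frobenius orbit of $d$ absolutely indecomposables, and conversely each Frobenius orbit of size $d$ assembles into one indecomposable over $\mathbb{F}_q$. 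Counting orbits by the necklace formula, $A_g(m,q^e) = \sum_{d\mid e} d\,c_d(m)$, where $c_d(m)$ is the number of size-$d$ orbits among absolutely indecomposables of dimension $m$ over $\overline{\mathbb{F}}_q$; Möbius inversion gives $I_g(m,q) = \sum_{d\mid m} c_d(m/d)$ with $c_d(r) = \frac{1}{d}\sum_{a\mid d}\mu(d/a)A_g(r,q^a)$. Substituting this into the Krull–Schmidt expression and collapsing the resulting divisor sums (the inner $\mu$-sum telescopes to the indicator $[a=D]$) yields the intermediate identity.

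Then I would conclude. Möbius-inverting the intermediate identity in its Frobenius-twisted form gives $A_g(n,q) = \sum_{d\mid n}\frac{\mu(d)}{d}L_{n/d}(q^d)$. Into this I substitute the product identity for $M_g$, which after taking logarithms of $\mathcal{M}(X,q) = \prod_{d\ge 1}P(X^d,q^d)^{\phi_d(q)}$ and using the definition of $H_g$ via $\log P$ reads $L_m(Q) = \sum_{b\mid m}\phi_b(Q)\,H_g(m/b,Q^b)$. Regrouping by $D = db$ reduces everything to the arithmetic identity
$$\sum_{d\mid D}\frac{\mu(d)}{d}\,\phi_{D/d}(q^d) = (q-1)\frac{\mu(D)}{D},$$
which follows from $\phi_n(q) = \frac{1}{n}\sum_{e\mid n}\mu(e)(q^{n/e}-1)$ together with $\sum_{e\mid k}\mu(e)=[k=1]$. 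This delivers exactly $A_g(n,q) = (q-1)\sum_{d\mid n}\frac{\mu(d)}{d}H_g(n/d,q^d)$.

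The main obstacle is the Galois descent step. I must justify that every absolutely indecomposable representation fixed by $F^e$ genuinely descends to $\mathbb{F}_{q^e}$ and remains absolutely indecomposable there — this rests on a Hilbert 90 / Lang–Steinberg argument exploiting that an absolutely indecomposable representation has only scalar endomorphisms modulo its radical — and that the size of each Frobenius orbit equals the degree of the endomorphism residue field, so that $A_g(m,q^e)$ and $I_g(n,q)$ truly satisfy the claimed necklace relation. Two smaller points also need care: nilpotency is preserved under field extension, so the splitting takes place inside the nilpotent world and these counts refer to the right objects; and the coefficients are polynomials in $q$, which legitimizes the substitutions $q\mapsto q^d$. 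The remaining manipulations — the two Möbius inversions and the verification of the arithmetic identity — are routine.
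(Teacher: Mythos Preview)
Your proposal is correct and follows essentially the same route as the paper, which simply cites Theorem~4.6 of Hua~\cite{JH 2000}: Krull--Schmidt to pass from $M_g$ to $I_g$, Galois descent to pass from $I_g$ to $A_g$, and then the two M\"obius inversions together with the arithmetic identity $\sum_{d\mid D}\frac{\mu(d)}{d}\phi_{D/d}(q^d)=(q-1)\frac{\mu(D)}{D}$. You have in fact supplied considerably more detail than the paper itself, and your identification of the Lang--Steinberg/Hilbert~90 descent step as the only substantive point is exactly right.
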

\begin{proof}
This is the counterpart of Theorem 4.6 from Hua \cite{JH 2000} with slight adjustment on the definition of $H_g(n,q)$, same arguments apply.
\end{proof}

Analogues of Theorem 4.6 of Hua \cite{JH 2000} have been proved by Bozec, Schiffmann \& Vasserot \cite{B-S-V 2018} for Lusztig nilpotent varieties and their variants 
using techniques from Algebraic Geometry. Their definition of nilpotency is stronger than the one used here. In the language of $\lambda$-ring and Adams operator, 
Theorem \ref{A_g(n,q)} is equivalent to the following identities in the ring of formal power series  $\mathbb{Q}(q)[[X]]$:
\begin{align*}
&\sum_{n=1}^\infty\!A_g(n,q)X^n = (q-1)\text{Log}\left(P(X,q)\right),  \\
&P(X, q) = \text{Exp}\left(\frac{1}{q-1} \sum_{n=1}^\infty\!A_g(n,q)X^n\right).
\end{align*}
For the definitions of operator $\text{Log}$ and $\text{Exp}$, we refer to the Appendix in Mozgovoy \cite{SM 2007}.

$H_g(n,q)$'s are rational functions in $q$, so are $A_g(n,q)$'s. As $A_g(n,q)$'s take integer values for all prime powers $q$, $A_g(n,q)$'s must be polynomials in $q$ with rational
coefficients. It follows from Lemma 2.9 of Bozec, Schiffmann \& Vasserot \cite{B-S-V 2018} that $A_g(n,q)\in\mathbb{Z}[q]$. Kac \cite{VK 1983} implies that the degree of 
polynomial $A_g(n,q)$ is at most $(g-1)n^2$. 
$I_g(n,q)$ and $M_g(n,q)$ can be calculated by the following identities:
\begin{align*}
&I_g(n,q) = \sum_{d\,|\,n}\frac{1}{d}\sum_{r\,|\,d}\mu\Big(\frac{d}{r}\Big)A_g\Big(\frac{n}{d},q^r\Big), \\
&1 + \sum_{n=1}^\infty M_g(n,q)X^n = \prod_{n=1}^\infty(1-X^n)^{-I_g(n,q)}.
\end{align*}
The first identity is the counterpart of the first identity of Theorem 4.1 from Hua \cite{JH 2000} and the second identity is a consequence of the Krull--Schmidt Theorem from representation theory. It follows that $I_g(n,q)$ and $M_g(n,q)$ are polynomials in $q$ with rational coefficients for all $n\ge 1$.

\begin{thm}\label{kwi}
Let
$$A_g(n,q) = \sum_{s=0}^{(g-1)n^2}a_{n, s} q^s,$$
where $a_{n, s}\in\mathbb{Z}$.
Then the following identity holds in $\mathbb{Q}(q)[[X]]$:
$$
P(X,q) = \prod_{n=1}^{\infty}\prod_{s=0}^{(g-1)n^2}\prod_{i=0}^{\infty} (1 - q^{s+i}X^n)^{a_{n,s}}.
$$
\end{thm}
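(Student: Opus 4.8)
The plan is to start from the second plethystic identity recorded just before the statement,
\[
P(X,q) = \mathrm{Exp}\!\left(\frac{1}{q-1}\sum_{n=1}^\infty A_g(n,q)X^n\right),
\]
and to massage its right-hand side into the asserted triple product. Substituting $A_g(n,q)=\sum_{s=0}^{(g-1)n^2}a_{n,s}q^s$ rewrites the argument of $\mathrm{Exp}$ as $\sum_{n\ge 1}\sum_{s}a_{n,s}\,\frac{q^sX^n}{q-1}$. I would then invoke the two structural properties of the plethystic exponential, namely $\mathrm{Exp}(F+G)=\mathrm{Exp}(F)\,\mathrm{Exp}(G)$ and $\mathrm{Exp}(aF)=\mathrm{Exp}(F)^{a}$ for $a\in\mathbb{Z}$, to obtain $P(X,q)=\prod_{n}\prod_{s}\mathrm{Exp}\bigl(\tfrac{q^sX^n}{q-1}\bigr)^{a_{n,s}}$. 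This reduces the entire theorem to the single building-block identity
\[
\mathrm{Exp}\!\left(\frac{q^sX^n}{q-1}\right)=\prod_{i=0}^\infty\bigl(1-q^{s+i}X^n\bigr),
\]
since substituting it into the previous display produces exactly the claimed product over $n$, over $0\le s\le(g-1)n^2$, and over $i\ge 0$.

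To prove the building block I would use the Adams-operator formula $\mathrm{Exp}(F)=\exp\bigl(\sum_{k\ge1}\psi_k(F)/k\bigr)$, where $\psi_k$ acts by $q\mapsto q^k$ and $X\mapsto X^k$ and $\exp$ is the ordinary exponential. With $F=\frac{q^sX^n}{q-1}$ one has $\psi_k(F)=\frac{q^{sk}X^{nk}}{q^k-1}$, so $\sum_{k\ge1}\psi_k(F)/k=\sum_{k\ge1}\frac{q^{sk}}{k(q^k-1)}X^{nk}$. On the other side, taking the ordinary logarithm of the product and expanding each factor gives $-\sum_{k\ge1}\frac{X^{nk}}{k}\sum_{i\ge0}q^{(s+i)k}$; summing the inner geometric series through $\sum_{i\ge0}q^{ik}=\frac{1}{1-q^k}$ collapses this to the same series $\sum_{k\ge1}\frac{q^{sk}}{k(q^k-1)}X^{nk}$. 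Since the two exponents agree, exponentiating yields the identity, with the sign working out to give the product rather than its reciprocal.

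The step I expect to be the genuine obstacle is interpretive rather than computational. The inner product $\prod_{i\ge0}(1-q^{s+i}X^n)^{a_{n,s}}$ does \emph{not} converge in the $X$-adic topology, because every factor has $X$-valuation $n$; consequently each coefficient of a given power of $X$ is an infinite sum of powers of $q$, and it is only the summation convention $\sum_{i\ge0}q^{ik}=\frac{1}{1-q^k}$ used above that renders these coefficients well-defined elements of $\mathbb{Q}(q)$. In effect the building-block identity simultaneously \emph{defines} the infinite product as a member of $\mathbb{Q}(q)[[X]]$ and establishes the formula. I would therefore take care to fix, following the conventions for $\mathrm{Log}$ and $\mathrm{Exp}$ in the appendix of Mozgovoy \cite{SM 2007}, the precise action of the Adams operators on $\mathbb{Q}(q)$ and the completion in which these products are formed, so that the interchange of the double and triple summations and the term-by-term geometric expansions are all justified.
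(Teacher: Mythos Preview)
Your argument is correct. The paper itself gives no self-contained proof here, only the line ``This is the counterpart of Theorem~4.9 from Hua \cite{JH 2000}, same arguments apply,'' so there is little to compare directly; your derivation from the displayed identity $P(X,q)=\mathrm{Exp}\bigl(\tfrac{1}{q-1}\sum_n A_g(n,q)X^n\bigr)$ via the multiplicativity of $\mathrm{Exp}$ and the single building-block computation $\mathrm{Exp}\bigl(\tfrac{q^sX^n}{q-1}\bigr)=\prod_{i\ge0}(1-q^{s+i}X^n)$ is exactly the natural unpacking of that reference in the plethystic language the paper has just introduced. Your observation that the inner product over $i$ does not converge $X$-adically and must be interpreted through the $\lambda$-ring convention $\sum_{i\ge0}q^{ik}=(1-q^k)^{-1}$ (equivalently, through the Adams operators on $\mathbb{Q}(q)$) is the one point that genuinely requires care, and you have identified it correctly; once this convention is fixed the remaining manipulations are formal.
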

\begin{proof}
This is the counterpart of Theorem 4.9 from Hua \cite{JH 2000}, same arguments apply.
\end{proof}

In the context of representations of quivers over finite fields, Kac \cite{VK 1983} conjectured that the constant term of the polynomial counting isomorphism classes of absolutely indecomposable representations with a given dimension vector is the same as the root multiplicity of the dimension vector in the corresponding Kac-Moody algebra. This conjecture was proved by Crawley-Boevey and Van den Bergh \cite{C-V 2004} for indivisible dimension vectors and 
by Hausel \cite{TH 2010} in general, which confirms that Theorem 4.9 from Hua \cite{JH 2000} is a $q$-deformation of Weyl-Kac denominator identity. Thus Theorem \ref{kwi} here may also be regarded as a $q$-deformation of Weyl-Kac denominator identity for some generalized Kac-Moody algebra.

When $g=1$, \textbf{Jordan Canonical Form Theorem} shows that there exists only one indecomposable nilpotent matrix over $\overline{\mathbb{F}}_q$ of a given order up to conjugation, where the unique conjugacy  class is the \textit{Jordan matrix} with eigenvalue $0$:
$$
\newcommand*{\temp}{\multicolumn{1}{|}{}}
\left[
\begin{array}{ccccc}
	0 & 1 & 0 & \dots & 0 \\ 
	0 & 0 & 1 & \dots & 0 \\ 
	\vdots & \vdots & \vdots & \ddots & \vdots \\
	0 & 0 & 0 & \dots & 1 \\
	0 & 0 & 0 & \dots & 0 
\end{array}
\right].
$$
This implies that $A_1(n,q)=1$ for all $n\ge 1$. Thus Theorem \ref{kwi} amounts to the following identity:
$$
1 + \sum_{\lambda\in\mathcal{P}} \frac{q^{- l(\lambda)}}{b_\lambda(q^{-1})}\,X^{|\lambda|} 
= \prod_{n=1}^{\infty}\prod_{i=0}^{\infty} (1 - q^{i}X^n).
$$

\begin{cjc}\label{cjc}
For any $g\ge 2$ and $n\ge 1$, all coefficients of the polynomial $A_g(n,q)$ are non-negative integers.
\end{cjc}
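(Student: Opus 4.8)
The plan is to realise $A_g(n,q)$ as the graded dimension of a naturally $\mathbb{Z}_{\ge 0}$-graded cohomology space, so that non-negativity of the coefficients $a_{n,s}$ becomes the tautology that a vector space has non-negative dimension in each degree. This mirrors the resolution of Kac's positivity conjecture for quivers by Hausel \cite{TH 2010} and the cohomological methods of Bozec, Schiffmann \& Vasserot \cite{B-S-V 2018}. The point-counting identities of Section~4 already do the purely formal half of the work: Theorem~\ref{A_g(n,q)} exhibits $A_g(n,q)$ (up to the factor $q-1$) as the plethystic logarithm $\mathrm{Log}$ of the stacky point count $P(X,q)$, and this is precisely the shape that a purity argument converts into Betti numbers.

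First I would set up the geometry. Write $Z_n\subset\mathcal{N}_n(\mathbb{F}_q)^g$ for the variety of $g$-tuples of nilpotent matrices and regard it as the representation variety of the $g$-loop quiver subject to the (individual) nilpotency constraint, with $GL(n,\mathbb{F}_q)$ acting by simultaneous conjugation; the stacky point count of $[Z_n/GL_n]$ is assembled by $P(X,q)$, and the operator $\mathrm{Log}$ extracts the contribution of the absolutely indecomposable (equivalently, geometrically simple) objects. The aim is then to construct an auxiliary \emph{framed} moduli space $\mathcal{M}_n^{\mathrm{fr}}$, a Nakajima-type GIT quotient of stable framed nilpotent tuples, together with a $\mathbb{C}^\times$-action whose Bia\l ynicki-Birula contraction carries $\mathcal{M}_n^{\mathrm{fr}}$ onto a proper nilpotent core, so that the cohomology is pure and its Poincaré polynomial is related to the point count through the standard framed/unframed comparison.

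The decisive step is \emph{purity}. I would show that the relevant (intersection) cohomology carries a pure mixed Hodge structure of Tate type; for such a structure the E-polynomial is literally a polynomial in $q$ whose coefficient of $q^s$ is the $2s$-th Betti number. Combined with the cohomological integrality theorem of Donaldson--Thomas theory — which identifies the $\mathrm{Log}$ appearing in Theorem~\ref{A_g(n,q)} with the graded dimension of a single BPS cohomology space — this forces every $a_{n,s}$ to be the dimension of a cohomology group, hence $a_{n,s}\ge 0$. As consistency checks, the degree bound $(g-1)n^2$ inherited from Kac \cite{VK 1983} should match the top nonvanishing cohomological degree, and the constant term $a_{n,0}$ should recover the expected root-multiplicity interpretation.

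The hard part will be establishing purity in the \emph{nilpotent} setting. Unlike the unconstrained quiver case of \cite{TH 2010}, the spaces here are cut out by nilpotency conditions and are singular — the ambient object is a nilpotent cone rather than a smooth representation space — so one cannot appeal directly to a hyperk\"ahler/smooth quiver-variety structure, and one must instead control the intersection cohomology of these singular spaces or produce an explicit semismall resolution. A second difficulty is that the nilpotency used here is strictly weaker than the Lusztig nilpotency of \cite{B-S-V 2018}, so their cohomological results do not transfer verbatim; the comparison between the two stratifications, and the transfer of purity across it, must be made precise. I expect that reconciling this weaker nilpotency with a purity (or cohomological-integrality) statement — rather than the extraction of indecomposables, which is already formal from Section~4 — is where the essential new work lies.
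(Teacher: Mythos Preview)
The paper does not prove this statement: it is explicitly stated as a conjecture (Conjecture~\ref{cjc}), and the only support offered is a list of computer-generated polynomials $A_2(n,q)$ for $1\le n\le 6$, all of whose coefficients happen to be non-negative. There is therefore no proof in the paper for your proposal to be compared against.

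Your write-up is not a proof either, and to your credit you do not pretend otherwise: it is a research programme sketching how one might hope to establish the conjecture by realising $A_g(n,q)$ as a Poincar\'e polynomial of a pure cohomology space, in the spirit of Hausel's resolution of Kac's positivity conjecture and the cohomological methods of Bozec--Schiffmann--Vasserot. You correctly flag the two genuine obstacles --- that the nilpotent locus here is singular, so purity is not free, and that the nilpotency condition used in this paper is strictly weaker than Lusztig nilpotency, so the results of \cite{B-S-V 2018} do not apply directly. Those are precisely the gaps that would need to be closed for this approach to succeed, and as things stand the conjecture remains open.
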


This conjecture is supported by the following observations generated by a Python program based on Theorem \ref{A_g(n,q)}:
\begin{align*}
A_2(1,q) = &\,\, 1, \\
A_2(2,q) = &\,\ 2q, \\
A_2(3,q) = &\,\, q^4 + 3q^2 + 2q, \\
A_2(4,q) = &\,\, q^9 + q^7 + q^6 + 4q^5 + 2q^4 + 7q^3 + 4q^2 +2q, \\
A_2(5,q) = &\,\, q^{16} + q^{14} + q^{13} + 2q^{12} + 2q^{11} + 4q^{10} + 4q^9 + 7q^8+ 8q^7 + 13q^6 +\\
&\,\, 13q^5 + 16q^4 + 14q^3 +7q^2 + 2q, \\
A_2(6,q) = &\,\,q^{25} + q^{23} + q^{22} + 2q^{21} + 2q^{20} + 4q^{19} + 3q^{18} + 7q^{17} + 7q^{16} + \\
&\,\, 10q^{15} + 11q^{14} + 19q^{13} + 17q^{12} + 28q^{11} + 29q^{10} + 39q^9 + 40q^8 + \\
&\,\, 53q^7 + 48q^6 + 52q^5 + 40q^4 + 25q^3 + 8q^2 + 2q.
\end{align*} 
\\
\vspace{0.2cm}
\textbf{\large{\!Acknowledgments}}
\vspace{0.2cm}

The author would like to thank Xueqing Chen, Bangming Deng, Jie Du and Yingbo Zhang for their helpful comments and suggestions.

\vspace{0.2cm}
Mathematics Enthusiast \\
\textit{Email address}: \texttt{jiuzhao.hua@gmail.com}

\end{document}